\theoremstyle{plain}
    \newtheorem{Corollary}{Corollary}[section]
    \newtheorem{Proposition}{Proposition}[section]
    \newtheorem{Conjecture}{Conjecture}[section]
\theoremstyle{remark}
    \newtheorem{Remark}{Remark}[section]
    \newtheorem{Example}{Example}[section]
\theoremstyle{definition}
\newcommand{\g}{\mathfrak{g}}
\newcommand{\h}{\mathfrak{h}}
\newcommand{\aff}{\mathfrak{aff}}
\newcommand{\ddx}[1]{ \dfrac{\partial}{\partial #1} }
\newcommand{\gl}{\mathfrak{gl}}
\newcommand{\sL}{\mathfrak{sl}}
\newcommand{\Complex}{\mathbb{C}}
\newcommand{\eps}{\varepsilon}
\newcommand{\ad}{\mathrm{ad}}
\newcounter{picture}
 \newcommand{\pic}[1]{\refstepcounter{picture}\label{#1}{Figure \arabic{picture}}\par}              
\title{The derived algebra of a stabilizer, families of coadjoint orbits, and sheets}                                     
\author{Anton Izosimov\footnote{Moscow State University and Higher School of Economics. E-mail: a.m.izosimov@gmail.com}}                 
\date{}
\begin{document}


\maketitle

\begin{abstract}
Let $\g$ be a finite-dimensional real or complex Lie algebra, and let $\mu \in \g^{*}$. In the first part of the paper, the relation is discussed between the derived algebra of the stabilizer of $\mu$ and the set of coadjoint orbits which have the same dimension as the orbit of $\mu$. In the second part, semisimple Lie algebras are considered, and the relation is discussed between the derived algebra of a centralizer and sheets. 
\end{abstract}

\section{Introduction}
Let $\g$ be a finite-dimensional real or complex Lie algebra. The group $G$ acts on the dual space $\g^{*}$ via the coadjoint action, and $\g^{*}$ is foliated into the orbits of this action. Consider the union of all orbits which have the same codimension $k$. Denote this union by $\g_{k}^{*}$. Each of the sets $\g^*_k$ is a quasi-affine algebraic variety.
The study of the varieties $\g_{k}^{*}$ was initiated by A.Kirillov in connection with the orbit method \cite{Kirillov}, which relates the unitary dual of $G$ to the set of coadjoint orbits $\g^{*}/G$. The sets  $\g_{k}^{*} / G$ appear in this picture as natural strata of $\g^{*}/G$, therefore it is important to understand the geometry of $\g_{k}^{*}$ for each $k$.\par \smallskip
So, consider a finite-dimensional real or complex Lie algebra $\g$. 
Let $\g_{\mu} = \{ x \in \g \mid \ad^{*}x(\mu) = 0\}$ be the stabilizer of an element $\mu \in \g^{*}$ with respect to the coadjoint representation of $\g$. In the present note, the following simple geometric fact is proved: {any element $\xi \in \g^*$ which is tangent to the variety $\g_{k}^{*}$ at a point $\mu\in \g^*$ vanishes on the derived algebra of $\g_{\mu}$}. As a corollary, {the codimenson of the set $\{\mu \in \g^* \mid \dim\,[\g_\mu, \g_\mu] \geq k \}$ is at least $k$}, which generalizes the well-known fact that the stabilizer of a generic element $\mu \in \g^*$ is Abelian.\par

In the second part of the note, a semisimple Lie algebra $\g$ is considered. In this case, the set $\g_{k}^{*}$ can be identified with the variety of adjoint orbits of codimension $k$. The irreducible components of this latter variety are called sheets. Let $a \in \g$, and let $\g^{a} = \{ x \in \g \mid [x,a] = 0\}$ be the centralizer of $a$. 
Then the above-formulated statement becomes the following:{ the derived algebra of the centralizer of $a$ is orthogonal to any sheet passing through $a$.} It was conjectured in the earlier version of the present paper \cite{prev} that {if $\g$ is a classical simple Lie algebra, and there is a unique sheet $S$ passing through $a \in \g$, then $[\g^a, \g^a]$ is exactly the orthogonal complement to $S$.} 
As it has been recently shown by A.Premet and L.Topley \cite{Premet}, this conjecture is true for any algebraically closed ground field of characteristic zero. The second conjecture, stating that if $S_1, \dots, S_k$ are sheets passing through $a$, then $[\g^a, \g^a]$ is the orthogonal complement to $\sum \mathrm{T}_aS_i$, remains open.
 
\section{The derived algebra of a stabilizer and families of coadjoint orbits}
\paragraph{Families of coadjoint orbits of the same dimension}
Let $\g$ be a real or complex Lie algebra. Let $\g_{\mu} = \{ x \in \g \mid \ad^{*}{x}(\mu) = 0\}$ be the stabilizer of an element $\mu \in \g^{*}$ with respect to the coadjoint action. 
Let $\g_{k}^{*} = \{\mu \in \g^{*} \mid \dim \g_{\mu} = k\}$. It is clear that $\g_{k}^{*}$ is a quasi-affine algebraic variety for each $k$, and $\g^{*}$ is a disjoint union of all $\g_{k}^{*}$.
The variety $\g_{k}^{*}$ can be also defined as the union of all coadjoint orbits of codimension $k$.\par
For any $\mu \in \g^*$, denote 
$\g^{*}(\mu) = \g_{\,\dim \g_{\mu}}^{*}$; in other words, $\g^{*}(\mu)$ is $\g_{k}^{*}$ passing through $\mu$.

\paragraph{Main statement}
\begin{Proposition}
	Let $\gamma$ be a smooth curve in $\g^*$ such that $\gamma(0) = \mu$ and $\gamma(t) \in \g^{*}(\mu)$ for all $t$. Then the tangent vector $\dot \gamma(0)$ vanishes on the derived algebra of $\g_{\mu}$:
	$$\langle \dot \gamma(0), [\g_{\mu}, \g_{\mu}] \rangle = 0.$$
\end{Proposition}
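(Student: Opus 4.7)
The plan is as follows. Fix $a, b \in \g_\mu$; the goal is to show $\langle \dot\gamma(0), [a,b]\rangle = 0$. The key idea is to exploit the hypothesis that $\dim \g_{\gamma(t)}$ is constant along the curve in order to smoothly deform $a$ into an element $a(t)$ of the stabilizer $\g_{\gamma(t)}$ for small $t$, and then to differentiate the resulting identity and pair with the undeformed element $b \in \g_\mu$.

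The crux of the argument, and the step I expect to be the main obstacle, is the following lifting claim: for every $a \in \g_\mu$ there exists a smooth curve $a(t) \in \g$, defined for small $|t|$, with $a(0) = a$ and $a(t) \in \g_{\gamma(t)}$ for all $t$. This is an instance of the standard fact that a smooth family of linear maps of constant rank admits smooth local frames for its kernels. In our setting the relevant family is $L_t \colon \g \to \g^*$ given by $L_t(x) = \ad^*_x \gamma(t)$, whose kernels are precisely $\g_{\gamma(t)}$ and have constant dimension $k = \dim \g_\mu$ by hypothesis. Picking a vector space complement $V$ to $\g_\mu$ in $\g$, the restriction $L_t|_V$ is an isomorphism onto its image for $t$ near $0$; using its inverse one can cook up a smooth family of projections $\pi(t) \colon \g \to \g_{\gamma(t)}$ with $\pi(0)|_{\g_\mu} = \mathrm{Id}$, and the desired lift is $a(t) := \pi(t) a$.

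Once the lifting is available, the rest is immediate. Differentiating the identity $\ad^*_{a(t)} \gamma(t) = 0$ at $t = 0$ yields
$$ \ad^*_{\dot a(0)} \mu + \ad^*_a \dot\gamma(0) = 0. $$
Evaluating both sides on $b \in \g_\mu$, the first term satisfies $\langle \ad^*_{\dot a(0)}\mu, b\rangle = \pm \langle \mu, [\dot a(0), b]\rangle = \mp \langle \ad^*_b \mu, \dot a(0)\rangle = 0$ since $b \in \g_\mu$ forces $\ad^*_b \mu = 0$, while the second term equals $\pm \langle \dot\gamma(0), [a, b]\rangle$. Hence $\langle \dot\gamma(0), [a,b]\rangle = 0$, as claimed. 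Note that only $a$ needs to be lifted; the element $b$ is used at $t=0$ only, and no assumption on $[a(t),b]$ lying in any stabilizer is needed.
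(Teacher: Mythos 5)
Your proof is correct and follows essentially the same route as the paper: both hinge on the constant dimension of $\g_{\gamma(t)}$ to produce a smooth lift of stabilizer elements along the curve, then differentiate the identity $\langle \gamma(t), [\cdot,\cdot]\rangle = 0$ at $t=0$ and use membership in $\g_{\mu}$ to kill the unwanted terms. The only cosmetic difference is that the paper lifts an entire smooth basis $e_{1}(t), \dots, e_{k}(t)$ of $\g_{\gamma(t)}$ and differentiates $\langle \gamma(t), [e_{i}(t), e_{j}(t)]\rangle = 0$, whereas you lift a single element and hold the second one fixed; your constant-rank argument for the existence of the lift is exactly the justification the paper leaves implicit.
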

\begin{proof}
 Since $\dim \g_{\gamma(t)} = \dim g_{\mu}$ for all $t$, the bundle of stabilizers is locally trivial over the curve $\gamma$, and it is possible to choose a basis $e_{1}(t) , \dots, e_{k}(t)$ in $\g_{\gamma(t)}$ such that $e_i(t)$ depends smoothly on $t$. Since $e_{i}(t) \in \g_{\gamma(t)}$, the following equality holds:
 $$
 	\langle \gamma(t), [e_{i}(t), e_{j}(t)]\rangle = 0.
 $$
 Differentiating with respect to $t$ at $t = 0$, obtain
   $$
 	\langle \dot \gamma(0), [e_{i}(0), e_{j}(0)]\rangle + \langle \mu, [\dot e_{i}(0), e_{j}(0)]\rangle + \langle \mu, [e_{i}(0), \dot e_{j}(0)]\rangle= 0.
 $$
Since $e_{i}(t)$ are elements of the stabilizer, the last two terms vanish, and
  $$
 	\langle \dot \gamma(0), [e_{i}(0), e_{j}(0)]\rangle = 0,
 $$
 which implies that $\dot \gamma(0)$ vanishes on the derived algebra of $\g_{\mu}$, q.e.d.
\end{proof}
\begin{Remark}
	The proposition remains true if $\gamma(t)$ is only defined for $t \geq 0$ and the right derivative $\dot \gamma_{+}(0)$ exists. This may happen if $\g^{*}(\mu)$ has a singularity at $\mu$.
\end{Remark}
\begin{Corollary}\label{fundam}
	Consider the case when $\g^{*}(\mu)$ is smooth at the point $\mu$. Then
	\begin{enumerate}
		\item Each element of the tangent space $\mathrm{T}_{\mu}\g^{*}(\mu)$ vanishes on the derived algebra of $\g_{\mu}$: $$\langle \mathrm{T}_{\mu}\g^{*}(\mu), [\g_{\mu}, \g_{\mu}] \rangle = 0.$$
	\item The following inequality is satisfied:
	\begin{align}\label{ineq1}
		\dim\, [\g_{\mu}, \g_{\mu}] \leq \mathrm{codim}_{\mu}\, \g^{*}(\mu).
	\end{align}
	\item The equality 
	$$\dim\, [\g_{\mu}, \g_{\mu}] = \mathrm{codim}_{\mu} \,\g^{*}(\mu)$$  is satisfied if and only if $\mathrm{T}_{\mu}\g^{*}(\mu)$ is exactly the annihilator of $ [\g_{\mu}, \g_{\mu}]$.
	\end{enumerate}
\end{Corollary}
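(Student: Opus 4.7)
The plan is to deduce all three statements from the Proposition by a short linear-algebra argument; the content of the corollary is essentially that, under the smoothness assumption, the tangent space at $\mu$ is realized by directions coming from curves, and then annihilators behave well with respect to dimension.

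For part (1), since $\g^{*}(\mu)$ is a smooth (sub)manifold at $\mu$, every tangent vector $v \in \mathrm{T}_{\mu}\g^{*}(\mu)$ can be written as $v = \dot\gamma(0)$ for some smooth curve $\gamma\colon(-\varepsilon, \varepsilon) \to \g^{*}(\mu)$ with $\gamma(0) = \mu$. Applying the Proposition to $\gamma$ yields $\langle v, [\g_{\mu}, \g_{\mu}]\rangle = 0$. Ranging over all $v$ gives $\langle \mathrm{T}_{\mu}\g^{*}(\mu), [\g_{\mu}, \g_{\mu}]\rangle = 0$.

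For part (2), the statement of (1) is precisely the inclusion
$$\mathrm{T}_{\mu}\g^{*}(\mu) \subseteq [\g_{\mu}, \g_{\mu}]^{\circ},$$
where $[\g_{\mu}, \g_{\mu}]^{\circ} \subseteq \g^{*}$ denotes the annihilator. Taking dimensions and using $\dim [\g_{\mu}, \g_{\mu}]^{\circ} = \dim \g - \dim [\g_{\mu}, \g_{\mu}]$ together with $\dim \mathrm{T}_{\mu}\g^{*}(\mu) = \dim \g - \mathrm{codim}_{\mu}\g^{*}(\mu)$, one immediately gets the desired inequality $\dim [\g_{\mu}, \g_{\mu}] \leq \mathrm{codim}_{\mu}\g^{*}(\mu)$.

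For part (3), the inclusion in (2) is an equality of subspaces precisely when the two dimensions agree, i.e.\ when $\dim [\g_{\mu}, \g_{\mu}] = \mathrm{codim}_{\mu}\g^{*}(\mu)$, which gives both directions of the ``if and only if''. There is no real obstacle here: once the Proposition is in hand, the smoothness hypothesis is used only to identify $\mathrm{T}_{\mu}\g^{*}(\mu)$ with the span of tangent vectors of curves through $\mu$, and everything else is bookkeeping with annihilators.
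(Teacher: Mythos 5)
Your proposal is correct and is exactly the argument the paper intends: the corollary is stated without proof as an immediate consequence of the Proposition, with smoothness used only to realize every tangent vector as $\dot\gamma(0)$ for a curve in $\g^{*}(\mu)$, followed by the standard dimension count for annihilators. Nothing to add.
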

\begin{Remark}
	Inequality (\ref{ineq1}) shows that the derived algebra of a stabilizer cannot be too big. It resembles the following inequality for the index of a stabilizer: $\mathrm{ind}\, \g_\mu \geq \mathrm{ind}\, \g$ (Vinberg, see \cite{Vinberg}).
\end{Remark}
\begin{Corollary}\label{cor}
	The codimension of the set of elements $\mu \in \g^{*}$ such that $$\dim \,[\g_{\mu}, \g_{\mu}] \geq k$$ is at least $k$.
\end{Corollary}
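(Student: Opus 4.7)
The plan is to stratify the set in question according to the dimension of the stabilizer. Write
$S_{k} = \{\mu \in \g^{*} : \dim[\g_{\mu},\g_{\mu}] \geq k\}$ as the disjoint union $\bigsqcup_{d} (S_{k} \cap \g_{d}^{*})$, where $d$ runs over possible stabilizer dimensions. It suffices to show that each piece $S_{k} \cap \g_{d}^{*}$ has codimension at least $k$ in $\g^{*}$ at every one of its points, since $S_{k}$ is then locally contained in a finite union of subvarieties of codimension $\geq k$.

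The first ingredient is that on $\g_{d}^{*}$ the stabilizer is a smooth family of $d$-dimensional subspaces of $\g$: it is the kernel of the linear map $\g \to \g^{*}$, $a \mapsto \ad^{*}_{a}\mu$, which has constant rank $\dim \g - d$ on $\g_{d}^{*}$. Locally one may therefore choose a smooth frame $e_{1}(\mu),\dots,e_{d}(\mu)$ of $\g_{\mu}$, and the brackets $[e_{i}(\mu),e_{j}(\mu)]$ span $[\g_{\mu},\g_{\mu}]$. Since the rank of a family of continuous vectors is lower semicontinuous, $S_{k} \cap \g_{d}^{*}$ is open in $\g_{d}^{*}$.

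Now fix $\mu \in S_{k} \cap \g_{d}^{*}$ and let $C$ be any irreducible component of $\g_{d}^{*}$ containing $\mu$. Because $S_{k} \cap \g_{d}^{*}$ is open and nonempty in $C$ and the smooth locus of $C$ is dense, there exists a point $\mu' \in C$ which is simultaneously a smooth point of $\g_{d}^{*}$ and satisfies $\dim[\g_{\mu'},\g_{\mu'}] \geq k$. At such $\mu'$ we have $\g^{*}(\mu') = \g_{d}^{*}$, so $\g^{*}(\mu')$ is smooth at $\mu'$ and Corollary~\ref{fundam}(2) applies, giving
$$
k \leq \dim[\g_{\mu'},\g_{\mu'}] \leq \mathrm{codim}_{\mu'}\g^{*}(\mu') = \mathrm{codim}\, C.
$$
Hence every irreducible component of $\g_{d}^{*}$ meeting $S_{k}$ has codimension at least $k$ in $\g^{*}$, which is what we needed.

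The main obstacle is the fact that $\mu$ itself may be a singular point of $\g_{d}^{*}$, so Corollary~\ref{fundam} is not directly applicable there; the use of the semicontinuity of $\dim[\g_{\mu},\g_{\mu}]$ together with the density of the smooth locus of each component of $\g_{d}^{*}$ is precisely designed to produce a nearby point where the earlier inequality can be applied. Once this reduction is in place, the codimension bound is an immediate consequence of Corollary~\ref{fundam}.
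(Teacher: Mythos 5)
Your proof is correct and follows the route the paper intends: the corollary is stated there without proof, as an immediate consequence of Corollary~\ref{fundam}(2) applied stratum by stratum on the sets $\g_{d}^{*}$. Your semicontinuity argument, which replaces a possibly singular point of $\g_{d}^{*}$ by a nearby smooth point of the same irreducible component where the inequality $\dim [\g_{\mu}, \g_{\mu}] \leq \mathrm{codim}_{\mu}\, \g^{*}(\mu)$ can legitimately be invoked, supplies exactly the detail the paper leaves implicit.
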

\begin{Example}
	For regular $\mu$, obtain a well-known fact: $\g_{\mu}$ is abelian. Corollary \ref{cor} can be viewed as a natural generalization of this fact. It says that for a ``not too singular'' $\mu$, its stabilizer is almost Abelian.
\end{Example}
\begin{Example}
 	Let $\g$ be complex semisimple. Then, for a generic singular element $\mu$, the dimension of $[\g_{\mu}, \g_{\mu}]$ equals three. So, by Corollary \ref{cor}, the codimension of the singular set for a complex semisimple Lie algebra is at least three. On the other hand, it is well known that this codimension is exactly three.
\end{Example}

\begin{Example}
Suppose that the set of singular elements in $\g^*$ is a hypersurface. Then the stabilizer of a generic singular element is one of the following types:
	\begin{enumerate}
		\item Abelian;
		\item $\aff(1) \oplus \mbox{Abelian}$, where $\aff(1)$ is the Lie algebra of affine transformations of the line;
		\item $\h_{2n+1} \oplus \mbox{Abelian}$, where $\h_{2n+1}$ is the $2n+1$-dimensional Heisenberg algebra.
	\end{enumerate} 
	The class of Lie algebras for which the set of singular elements in $\g^*$ is a hypersurface is particularly important for integrable systems \cite{Bolsinov, JK}. 
\end{Example}
\begin{Remark}
Rewrite (\ref{ineq1}) as
\begin{align}\label{ineq2}
	\dim_{\mu} \g^{*}(\mu) - \dim O(\mu) \leq \dim \g_{\mu} - \dim\, [\g_{\mu}, \g_{\mu}]
\end{align}
where $O(\mu)$ is the coadjoint orbit of $\mu$. Coadjoint orbits form families and the difference $$\dim_{\mu} \g^{*}(\mu) - \dim O(\mu)$$ is exactly the local dimension of such a family. Inequality (\ref{ineq2}) estimates this dimension.
\end{Remark}
\begin{Example}
	Let $\g = \gl(n)$, 
	$$\mu = \mathrm{diag}(\underbrace{\lambda_{1}, \dots, \lambda_{1}}_{k_{1}},\underbrace{\lambda_{2}, \dots, \lambda_{2}}_{k_{2}},  \dots, \underbrace{\lambda_{s}, \dots \lambda_{s}}_{k_{s}}).$$
	Then $\g_{\mu} \simeq \gl(k_{1}) \oplus  \dots \oplus \gl(k_{s})$, so $\dim \g_{\mu} - \dim\, [\g_{\mu}, \g_{\mu}] = s$.
	On the other hand, the set of orbits close to $O(\mu)$ which have the same dimension as $O(\mu)$ is parameterized by  the eigenvalues $\lambda_{1}, \dots, \lambda_{s}$, so $\dim_{\mu} \g^{*}(\mu) - \dim O(\mu)$  is also equal to $s$, and inequality (\ref{ineq2}) turns into equality. 
\end{Example}

\paragraph{What happens if the transverse Poisson structure is linearizable} Let $M$ be a Poisson manifold, and $\mu \in M$. Recall that, by the Weinstein splitting theorem \cite{Weinstein1}, $M$ can be locally decomposed into the direct product of a symplectic manifold and a manifold with a Poisson structure vanishing at $\mu$. This latter Poisson structure is unique up to a diffeomorphism and is called the transverse Poisson structure at the point $\mu$. More details can be found in \cite{Zung}.\newpage
	In the case when $M$ is the dual $\g^{*}$ of a Lie algebra $\g$, the linear part of the transverse Poisson structure at a point $\mu$ is the Lie-Poisson structure of the stabilizer $\g_{\mu}$. Consequently, if the transverse Poisson structure at a point $\mu$ is linearizable, then the Lie-Poisson structure on $\g^*$ can be locally decomposed into the direct product of a symplectic structure and the Lie-Poisson structure of the stabilizer $\g_{\mu}$, which allows to prove the following.
\begin{Proposition}\label{lin}
	Assume that the transverse Poisson structure at a point $\mu$ is linearizable. Then $\g^{*}(\mu)$ is smooth at the point $\mu$, and $$\dim_{\mu} \g^{*}(\mu) - \dim O(\mu) = \dim \g_{\mu} - \dim [\g_{\mu}, \g_{\mu}].$$
\end{Proposition}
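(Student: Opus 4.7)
The plan is to use Weinstein's splitting together with the linearizability hypothesis directly. By assumption there is a neighborhood $U$ of $\mu$ in $\g^{*}$ and a Poisson isomorphism $\Phi : U \to S \times V$ sending $\mu$ to $(s_{0}, 0)$, where $S$ is symplectic of dimension $\dim O(\mu)$ and $V$ is a neighborhood of $0$ in $\g_{\mu}^{*}$ with its Lie--Poisson structure. A basic property of product Poisson manifolds is that the symplectic leaves of $S \times V$ are exactly the products $S \times O'$, where $O'$ ranges over the symplectic leaves of $V$; for the Lie--Poisson structure on $\g_{\mu}^{*}$ these leaves are the coadjoint orbits of $G_{\mu}$.

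First, I would use this to identify $\g^{*}(\mu)$ inside $U$. A point $\Phi^{-1}(s, y)$ lies in $\g^{*}(\mu)$ iff its symplectic leaf has dimension $\dim O(\mu)$, iff the $G_{\mu}$-coadjoint orbit through $y$ is a single point. An elementary computation shows that $y \in \g_{\mu}^{*}$ is a fixed point of the $G_{\mu}$-coadjoint action precisely when $\langle y, [\g_{\mu}, \g_{\mu}]\rangle = 0$, so the fixed set is the linear subspace
$$W = \{ y \in \g_{\mu}^{*} \mid \langle y, [\g_{\mu}, \g_{\mu}]\rangle = 0 \} \subset \g_{\mu}^{*},$$
of dimension $\dim \g_{\mu} - \dim [\g_{\mu}, \g_{\mu}]$. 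Combining these observations, $\Phi(\g^{*}(\mu) \cap U) = S \times (W \cap V)$, which is a smooth submanifold; pulling back by $\Phi$ shows that $\g^{*}(\mu)$ is smooth at $\mu$ of dimension $\dim O(\mu) + \dim \g_{\mu} - \dim [\g_{\mu}, \g_{\mu}]$, which is equivalent to the claimed identity.

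I do not expect any serious obstacle: the argument is essentially a bookkeeping exercise once the splitting is in place. The two facts used as black boxes---that the symplectic leaves of a product Poisson manifold are products of leaves of the factors, and that coadjoint-fixed points in $\g_{\mu}^{*}$ form the annihilator of $[\g_{\mu}, \g_{\mu}]$---are both standard. One sanity check worth performing is that the dimension count obtained above saturates inequality (\ref{ineq1}) of Corollary \ref{fundam}, so item~(3) of that corollary gives the complementary conclusion that $\mathrm{T}_{\mu}\g^{*}(\mu)$ is exactly the annihilator of $[\g_{\mu}, \g_{\mu}]$, providing an independent consistency check on the description of $\g^{*}(\mu)$ that falls out of the splitting.
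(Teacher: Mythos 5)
Your argument is correct and is essentially the same as the paper's: both use the Weinstein splitting $U \cong (\text{symplectic}) \times \g_{\mu}^{*}$ provided by linearizability, identify $\g^{*}(\mu)$ locally with the product of the symplectic factor and the annihilator of $[\g_{\mu},\g_{\mu}]$ (the zero-dimensional leaves of the Lie--Poisson structure on $\g_{\mu}^{*}$), and read off smoothness and the dimension count. No gaps; the consistency check against Corollary \ref{fundam} is a nice touch but not needed.
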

\begin{Example}[M. Duflo, see \cite{Weinstein, Zung}]
	Let $\g$ be a Lie algebra given by the following linear Poisson structure
	$$
		\ddx{x_{1}} \wedge (x_{2}\ddx{x_{2}} + x_{3}\ddx{x_{3}} + 2x_{4}\ddx{x_{4}}) + x_{4}\ddx{x_{2}}\wedge\ddx{x_{3}}.
	$$
	Consider $\mu \in \g^*$ with $x_{4} = 0$ and $x_{2}^{2}+x_{3}^{2} > 0$. Then the stabilizer of $\mu$ is Abelian: $\dim\, [\g_{\mu}, \g_{\mu}] = 0$. \\On the other hand, $\mathrm{codim}\, \g^{*}(\mu) = 1$. Consequently, the transverse Poisson structure at $\mu$ is not linearizable.
\end{Example}

\section{Semisimple case: the derived algebra of a centralizer and sheets}
\paragraph{Sheets}
In the semisimple case, the coadjoint and the adjoint actions can be identified by the means of the Killing form. This identification maps the variety $\g^{*}_{k}$ to the variety $$\g^{(k)} = \{a \in \g \mid \dim \g^a = k\}$$ where $\g^a = \{x \in \g \mid [a, x] = 0\}$ is the centralizer of $a$.\par
Irreducible components of the varieties $\g^{(k)}$ are called sheets of $\g$.
Recall some facts about the topology of sheets.
\begin{enumerate}
	\item Sheets are not necessarily smooth. However, if $\g$ is a \textit{classical} simple Lie algebra, then sheets are smooth (Im Hof \cite{ImHof}).
	\item Sheets are not necessarily disjoint even in the classical case. However, they are for $\g = \sL(n, \Complex)$ (Kraft and Luna \cite{Kraft}, Peterson \cite{Peterson}).
\end{enumerate}
Study the relation between sheets and the derived algebra of a centralizer.

\paragraph{The main statement in the semisimple case}
 Using Corollary \ref{fundam}, obtain the following.
\begin{Proposition}\label{main}
	Let $\g$ be a real or complex semisimple Lie algebra. Suppose that $a \in \g$ belongs to a sheet $S$, and $S$ is smooth at the point $a$. Then
 the derived algebra of the centralizer of $a$ is orthogonal to $S$ at the point $a$:
		$$
			\langle\, [\g^{a}, \g^{a}], \mathrm{T}_{a}S \,\rangle = 0.
		$$
\end{Proposition}
\newpage
\begin{Corollary}
		 	Let $\g$ be a real or complex semisimple Lie algebra. Suppose that $a \in \g$ belongs to a sheet $S$, and $S$ is smooth at the point $a$. Then
the following three statements are equivalent.
		\begin{enumerate}
		\item The derived algebra of the centralizer of $a$ is exactly the orthogonal complement to $S$ at the point $a$:
		\begin{align}\label{sheetEq}
			[\g^{a}, \g^{a}]  =  (\mathrm{T}_{\mu}S)^{\bot}.
		\end{align}
		\item The dimension of the derived algebra of the centralizer of $\mu$ is equal to the codimension of $S$:
		$$\dim\,  [\g^{a}, \g^{a}] = \mathrm{codim}\, S.$$
		\item The codimension of $[\g^{a}, \g^{a}] $ in $\g^{a}$ is equal to the codimension of $O(a)$ in $S$:
		$$\dim\,\g^{a} - \dim\,  [\g^{a}, \g^{a}] = \dim\, S - \dim O(a).$$	
		\end{enumerate}
		\end{Corollary}

If one of these three conditions is satisfied, 
the picture is the following: the centralizer is the orthogonal complement to the orbit while its derived algebra it is the orthogonal complement to the sheet (Figure \ref{picture}).
\begin{figure}
\psset{unit=0.7cm}
{\begin{pspicture}(-5, -0)(10,8.5)
\pscurve(0,2)(1.2,4)(4,6)
\psline(0,2)(4,0)
\psline(4,6)(8,4)
\pscurve(4,0)(5.2,2)(8,4)
\psline(6,5)(6,8)
\pscurve(2,1)(3.2,3)(6,5)
\psline(7,3)(10,3)(10,8)(2,8)(2,5)
  \psset{linestyle=none}
 \pstextpath[c]{\pscurve(2.2,0.8)(3.4,2.8)(6.2,4.8)}{\\Orbit of $a$}
\rput(5.8,1.5){Sheet}
\rput(7,6.5){$ [\g^{a}, \g^{a}]$}
\rput(10.5,6){$\g^{a}$}
\end{pspicture}}
\begin{center}
\pic{picture}
\end{center}
\end{figure}
\paragraph{The case of a semisimple element}
\begin{Proposition}
	If $\g$ is a real or complex semisimple Lie algebra, and $a \in \g$ is semisimple, then there is only one sheet $S$ passing through $a$, $S$ is smooth at $a$, and equality (\ref{sheetEq}) holds.
\end{Proposition}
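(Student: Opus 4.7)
The plan is to reduce to Proposition~\ref{main} by explicitly describing the sheet $S$ through a semisimple element $\mu$ in a neighbourhood of $\mu$. The key structural fact I would exploit is that for semisimple $\mu$ the centralizer $\g^{\mu}$ is a reductive (Levi) subalgebra, so $\g^{\mu} = Z \oplus [\g^{\mu},\g^{\mu}]$ with $Z := Z(\g^{\mu})$, and $\ad\mu$ is diagonalizable, giving the direct decomposition $\g = \g^{\mu} \oplus [\g,\mu]$. The element $\mu$ itself lies in $Z$.

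The first technical step is to compute $\g^{\mu+x}$ for $x\in \g^{\mu}$ small. Since $[\mu,x]=0$, the operators $\ad\mu$ and $\ad x$ commute, so $\ad x$ preserves each eigenspace of $\ad\mu$; on a nonzero $\lambda$-eigenspace, $\ad(\mu+x)=\lambda\cdot\mathrm{id}+\ad x$ remains invertible for $x$ sufficiently small. Hence $\g^{\mu+x}\subseteq \g^{\mu}$, and restricting to $\g^{\mu}$ one reads off $\g^{\mu+x}=(\g^{\mu})^{x}$. In particular, $\dim\g^{\mu+x}=\dim\g^{\mu}$ exactly when $(\g^{\mu})^{x}=\g^{\mu}$, i.e.\ when $x\in Z$.

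Next I would promote this to the global picture. Using $\g = [\g,\mu]\oplus\g^{\mu}$, the action map $G\times(\mu+\g^{\mu})\to \g$ is submersive at $(e,\mu)$, so by the implicit function theorem every $\nu$ near $\mu$ is $G$-conjugate to some $\mu+x$ with $x\in\g^{\mu}$ small. Combined with the first step, the locus $\g^{(k)}$ (with $k=\dim\g^{\mu}$) coincides in a neighbourhood of $\mu$ with $G\cdot(\mu+Z_{0})$ for a small neighbourhood $Z_{0}$ of $0$ in $Z$. This set is the image of the connected, irreducible manifold $G\times Z_{0}$; a dimension count (the kernel of the differential of the parametrization at $(e,0)$ equals $\g^{\mu}$, because $[\g,\mu]\cap Z=0$) yields $\dim_{\mu} S = \dim G - \dim\g^{\mu} + \dim Z$, and simultaneously establishes both smoothness of $S$ at $\mu$ and uniqueness of the sheet through $\mu$.

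Finally, $\mathrm{codim}\, S = \dim\g^{\mu} - \dim Z = \dim[\g^{\mu},\g^{\mu}]$, so the equivalence in Proposition~\ref{main} immediately delivers the equality~(\ref{sheetEq}). The only mildly delicate point is verifying that the slice $\mu+\g^{\mu}$ catches every nearby element of $\g^{(k)}$ after $G$-translation, which is where the semisimplicity of $\mu$ (via the clean eigenspace decomposition of $\ad\mu$, rather than just reductivity of $\g^{\mu}$) is essential; the remaining steps are formal manipulations.
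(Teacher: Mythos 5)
Your argument is correct, but it takes a genuinely different route from the paper. The paper reduces everything to Proposition~\ref{lin}: it invokes Molino's theorem that the transverse Poisson structure is linearizable whenever $\g^{\mu}$ admits an $\ad\,\g^{\mu}$-invariant complement $\m$, and exhibits such a complement as the sum of the root spaces for roots not vanishing on $\mu$ (which is exactly your $[\g,\mu]$). You instead bypass the Poisson-geometric machinery entirely and prove the relevant consequences by hand: the decomposition $\g=\g^{\mu}\oplus[\g,\mu]$ gives a slice $\mu+\g^{\mu}$, the computation $\g^{\mu+x}=(\g^{\mu})^{x}$ for small $x\in\g^{\mu}$ identifies $\g^{(k)}$ near $\mu$ with $G\cdot(\mu+Z_{0})$, and reductivity of $\g^{\mu}$ converts $\dim\g^{\mu}-\dim Z$ into $\dim[\g^{\mu},\g^{\mu}]$ so that the equivalence in Proposition~\ref{main} delivers~(\ref{sheetEq}). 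What the paper's route buys is brevity and a statement (Proposition~\ref{lin}) applicable beyond the semisimple setting; what yours buys is a self-contained Lie-theoretic proof plus an explicit local description of the sheet, $\dim S=\dim O(\mu)+\dim Z(\g^{\mu})$, i.e.\ the Dixmier-sheet picture. One point you should make explicit: to conclude that $G\cdot(\mu+Z_{0})$ is smooth of the stated dimension (and hence that $\mu$ lies on a unique irreducible component) you need the parametrization $G\times Z_{0}\to\g$ to have \emph{constant} rank near $(e,0)$, not just the kernel computation at that one point; this holds because every $\mu+z$ with $z\in Z(\g^{\mu})$ small is again semisimple with centralizer exactly $\g^{\mu}$ (the center of $\g^{\mu}$ is toral), so $[\g,\mu+z]\cap Z=0$ and $\dim[\g,\mu+z]$ is constant. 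With that line added, your proof is complete.
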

\begin{proof}
Since the transverse Poisson structure at a semisimple point is linearizable \cite{Molino}, the proof follows from Proposition \ref{lin}.
\end{proof}
\begin{Corollary}
	Let $\g$ be a compact real Lie algebra. Then all sheets of $\g$ are smooth, disjoint, and the equality (\ref{sheetEq}) holds for every $a \in \g$.
\end{Corollary}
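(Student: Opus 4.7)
The plan is to apply the preceding proposition pointwise, which reduces the task to checking that every element of a compact Lie algebra is semisimple. A compact real Lie algebra carries an $\ad$-invariant positive definite inner product, so $\ad \mu$ is skew-symmetric for every $\mu \in \g$. Skew-symmetric operators are diagonalizable over $\Complex$ (with purely imaginary eigenvalues), hence every $\mu \in \g$ is a semisimple element.

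A minor technical issue is that the previous proposition is stated for semisimple $\g$, while a compact Lie algebra is only reductive: $\g = Z(\g) \oplus \g_{s}$ with $\g_{s} = [\g, \g]$ compact semisimple. The fix is routine. The center acts trivially under the adjoint action, so for $\mu = \zeta + \nu$ with $\zeta \in Z(\g)$ and $\nu \in \g_{s}$ we have $\g^{\mu} = Z(\g) \oplus \g_{s}^{\nu}$, whence $[\g^{\mu}, \g^{\mu}] = [\g_{s}^{\nu}, \g_{s}^{\nu}]$, and sheets of $\g$ are products $Z(\g) \times (\text{sheets of } \g_{s})$. The Killing form of $\g_{s}$ extended by an $\ad$-invariant inner product on $Z(\g)$ identifies the orthogonal complement to a sheet through $\mu$ in $\g$ with the orthogonal complement to the corresponding sheet through $\nu$ in $\g_{s}$, so both smoothness and the equality (\ref{sheetEq}) at $\mu$ follow from the corresponding statements at $\nu$.

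Since $\nu$ is semisimple in $\g_{s}$, the preceding proposition gives that a unique sheet of $\g_{s}$ passes through $\nu$, it is smooth there, and (\ref{sheetEq}) holds. Translating back: a unique sheet of $\g$ passes through each $\mu$, it is smooth at $\mu$, and (\ref{sheetEq}) holds. Because this is true at every $\mu$, every sheet is smooth at every one of its points (hence smooth), and any two sheets sharing a point coincide, so the sheets are disjoint.

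The main obstacle, such as it is, is the reductive-versus-semisimple bookkeeping in the second paragraph; the essential content is simply the observation that compactness forces every element to be semisimple, after which the corollary is a direct consequence of the previous proposition applied pointwise.
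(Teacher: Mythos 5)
Your proposal is correct and follows the same route as the paper: the corollary is the preceding proposition applied pointwise once one observes that every element of a compact Lie algebra is semisimple (the paper's entire proof is that one observation). Your extra care with the center in the reductive case and the explicit skew-symmetry argument for semisimplicity are fine elaborations of details the paper leaves implicit.
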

\begin{proof}
	The proof follows from the fact that all elements of a compact algebra are semisimple.
\end{proof}
\paragraph{The case of $\sL(n, \Complex)$}
\begin{Proposition}
	The equality (\ref{sheetEq}) holds for every $a \in \sL(n, \Complex)$.
\end{Proposition}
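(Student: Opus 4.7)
The strategy is to verify condition (3) of Proposition~\ref{main}, which amounts to
$$\dim\g^\mu - \dim[\g^\mu,\g^\mu] = \dim S - \dim O(\mu),$$
by computing both sides explicitly from the Jordan data of $\mu$ and checking that they agree.

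Write $\mu = \mu_s + \mu_n$ for the Jordan decomposition and decompose $\Complex^n = \bigoplus_{i=1}^s V_i$ into the generalized eigenspaces of $\mu_s$, with eigenvalues $\alpha_i$ and multiplicities $m_i = \dim V_i$. On each $V_i$, the operator $N_i := (\mu - \alpha_i I)|_{V_i}$ is nilpotent with a Jordan partition $\lambda^{(i)}$ of $m_i$. The centralizer factors along this decomposition, $\gl(n)^\mu = \prod_i \gl(m_i)^{N_i}$, and so does its derived subalgebra; passing from $\gl(n)$ to $\sL(n)$ trims one dimension off the centralizer (the identity is removed) while leaving the derived subalgebra untouched (commutators are traceless). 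The left-hand side therefore reduces to the following key lemma, which I would take as the technical heart of the argument:
\begin{align*}
	\dim\gl(m)^N - \dim[\gl(m)^N,\gl(m)^N] = \lambda_1 \quad \text{for nilpotent } N \in \gl(m,\Complex) \text{ of Jordan type } \lambda.
\end{align*}
Granting this, $\dim\g^\mu - \dim[\g^\mu,\g^\mu] = -1 + \sum_i \lambda^{(i)}_1$.

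For the right-hand side I invoke the Kraft--Luna--Peterson classification: sheets of $\sL(n,\Complex)$ are pairwise disjoint and parametrised by partitions $\nu$ of $n$, via $S_\nu = G\cdot\mathfrak{z}(L_\nu)^{\mathrm{reg}}$ for the Levi $L_\nu$ of block type $\nu$. I claim that $\mu$ lies in $S_\nu$ with
\begin{align*}
	\nu = \bigsqcup_{i=1}^{s} (\lambda^{(i)})^T
\end{align*}
(multiset union of the transposed partitions). This is verified by a Richardson-type deformation on each generalized eigenspace: $\alpha_i I + N_i$ admits a deformation to a semisimple element with eigenvalue multiplicities $(\lambda^{(i)})^T$, so patching these yields a family $\mu(t)$ with $\mu(0) = \mu$ and $\mu(t)$ semisimple of multiplicity type $\nu$ for $t\ne 0$. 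Since the number of parts of $\nu$ is $k_\nu = \sum_i \lambda^{(i)}_1$ (the length of $(\lambda^{(i)})^T$ equals the largest part of $\lambda^{(i)}$),
\begin{align*}
	\dim S - \dim O(\mu) \;=\; \dim\mathfrak{z}(L_\nu) \;=\; k_\nu - 1 \;=\; -1 + \sum_i \lambda^{(i)}_1,
\end{align*}
which matches the left-hand side.

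The hard part is the lemma on nilpotent centralizers. My approach is to identify $\gl(m)^N$ with the associative algebra $\mathrm{End}_R(V)$, where $R = \Complex[x]/(x^{\lambda_1})$ and $V \cong \bigoplus_i R/(x^{\lambda_i})$ is the $R$-module on which $N$ acts as multiplication by $x$. Because $\lambda_1$ is a part of $\lambda$, the module $V$ contains $R$ as a direct summand and is in particular faithful, so the associative center $Z(\mathrm{End}_R(V)) = R$ has the desired dimension $\lambda_1$. When $V$ is free over $R$, Morita equivalence gives that the Lie-algebra abelianization of $\mathrm{End}_R(V)$ is also isomorphic to $R$; the general (non-free) case requires a direct computation in the block decomposition of $\mathrm{End}_R(V)$ indexed by distinct Jordan block sizes, to verify that the off-diagonal commutators $[(k,\ell),(\ell,k)]$ precisely contribute the elements needed to cut the naive diagonal abelianization $\bigoplus_k R/(x^{n_k})$ down to the correct $\lambda_1$-dimensional quotient.
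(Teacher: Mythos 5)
Your overall strategy coincides with the paper's: verify the third equivalent condition of Proposition~\ref{main} by showing that both $\dim\g^{\mu}-\dim[\g^{\mu},\g^{\mu}]$ and $\dim S-\dim O(\mu)$ equal $\bigl(\sum_i \lambda^{(i)}_1\bigr)-1$, the sum over eigenvalues of the largest Jordan block sizes, minus one. The two sub-arguments, however, are carried out differently, and the comparison is instructive. For the sheet side, the paper does \emph{not} compute $\dim S$ from the classification of sheets: it uses the same block-diagonal deformation $\mu_{\eps}$ that you use, but only to get the lower bound $\dim S\geq\dim O(\mu)+\sum\lambda^{(i)}_1-1$, and then closes the sandwich with the \emph{upper} bound $\dim S-\dim O(\mu)\leq\dim\g^{\mu}-\dim[\g^{\mu},\g^{\mu}]$ already supplied by Proposition~\ref{main}; this is more economical and avoids having to identify which sheet $S_\nu$ contains $\mu$ or to invoke the dimension formula $\dim S-\dim O=\dim\mathfrak{z}(\mathfrak{l}_\nu)$. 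Your route via Kraft--Luna--Peterson is also correct (the identification $\nu=\bigsqcup(\lambda^{(i)})^{T}$ and the count $k_\nu-1=\sum\lambda^{(i)}_1-1$ check out), but it imports more external machinery than is needed. For the centralizer side, the paper simply cites the explicit commutation relations of \cite{Yakimova}, whereas you reformulate the key lemma as a statement about the trace space $A/[A,A]$ of $A=\mathrm{End}_R(V)$, $R=\Complex[x]/(x^{\lambda_1})$. This is an attractive packaging, but it is also the one place where your argument is genuinely incomplete: the appeal to the center $Z(A)\cong R$ proves nothing about $A/[A,A]$ (these are different invariants, and their dimensions agreeing here is precisely what must be shown), Morita invariance of $HH_0$ covers only the free case $\lambda=(\lambda_1,\dots,\lambda_1)$, and the non-free case is described but not executed. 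Since that computation is the technical heart of the left-hand side, you should either carry out the block computation you sketch or fall back on the explicit relations in \cite{Yakimova}, as the paper does; with that filled in, the proof is sound.
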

\begin{proof}
	Denote by $h(\lambda)$ the size of the largest Jordan block of $a$ with the eigenvalue $\lambda$. Prove that
	$$\dim\,\g^{a} - \dim\,  [\g^{a}, \g^{a}] = \dim\, S - \dim O(a) = \left(\sum h(\lambda)\right) - 1.$$
	\begin{enumerate}
		\item $\dim\,\g^{a} - \dim\,  [\g^{a}, \g^{a}]  =  \left(\sum h(\lambda)\right) - 1$. \par
		It suffices to prove this equality for the case when $a$ is nilpotent. This can be easily done by studying the commutation relations for $\g^{a}$ found by O.Yakimova \cite{Yakimova}.
	\item $\dim\, S - \dim O(a) = \left(\sum h(\lambda)\right) - 1$.\par
		This fact is known in the case when $a$ is nilpotent (A.Moreau \cite{Moreau}). The idea of the proof for an arbitrary element is as follows.
		For each eigenvalue $\lambda$, take a sequence of complex numbers $\eps_{1}(\lambda), \dots, \eps_{h(\lambda)}(\lambda)$. 	
		To each Jordan block of $a$ with the eigenvalue $\lambda$, add a diagonal matrix $\mathrm{diag}(\eps_{1}(\lambda), \dots, \eps_{k}(\lambda))$ where $k$ is the size of the block. This gives a family $a_\eps \in \sL(n, \Complex)$ of dimension $\left(\sum h(\lambda)\right) - 1$. It is easy to check that the dimension of the centralizer of each $x \in a_{\eps}$ is equal to the dimension of $\g^a$, so $a_\eps \subset S$ where $S$ is the sheet passing through $a$. At the same time, the family $a_{\eps}$ is transversal to the orbit $O(a)$, so
		$$
			\dim S \geq \dim O(a) + \left(\sum h(\lambda)\right) - 1.
		$$
		On the other hand, by Proposition \ref{main},
		$$
			\dim S - \dim O(a)  \leq \dim\,\g^{a} - \dim\,  [\g^{a}, \g^{a}]  =  \left(\sum h(\lambda)\right) - 1,
		$$		
		so $\dim S - \dim O(a)   =  \left(\sum h(\lambda)\right) - 1$, q.e.d.
		\end{enumerate}
\end{proof}

\paragraph{The case of an arbitrary classical simple Lie algebra}
In the previous version of the present paper \cite{prev}, the following conjecture was formulated.
\begin{Conjecture}
	Let $\g$ be a complex classical simple Lie algebra.
	\begin{enumerate}
		\item If there is only one sheet $S$ passing through $a \in \g$, then equality (\ref{sheetEq}) holds, i.e.
		$$
			[\g^{a}, \g^{a}]  =  \left(\mathrm{T}_{a}S\right)^{\bot}.
		$$
		\item If $a \in \g$ belongs to several sheets $S_{1}, \dots S_{k}$, then
		$$
			[\g^{a}, \g^{a}]  =  \left(\sum_{i=1}^k \mathrm{T}_{a}S_{i}\right)^{\bot}.
		$$		
	\end{enumerate}
\end{Conjecture}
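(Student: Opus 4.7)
The plan is to derive both parts of the conjecture from a single dimension equality. By Proposition \ref{main}, the inclusion $[\g^{\mu}, \g^{\mu}] \subset (\mathrm{T}_{\mu} S_i)^{\bot}$ holds for every sheet $S_i$ through $\mu$, and intersecting over $i$ gives $[\g^{\mu}, \g^{\mu}] \subset (\sum_i \mathrm{T}_{\mu} S_i)^{\bot}$. Part (1) is the special case of a single sheet, so the whole conjecture reduces to the dimension identity
$$\dim\,\g^{\mu} - \dim\,[\g^{\mu}, \g^{\mu}] \;=\; \dim\Bigl(\sum_i \mathrm{T}_{\mu} S_i\Bigr) - \dim O(\mu).$$

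My first step would be to reduce to the nilpotent case via the Jordan decomposition $\mu = \mu_s + \mu_n$. The centralizer $\g^{\mu_s}$ is a reductive subalgebra of $\g$ whose semisimple part is again a product of classical factors, $\g^{\mu}$ coincides with the centralizer of $\mu_n$ in $\g^{\mu_s}$, and the sheets of $\g$ through $\mu$ are obtained by Lusztig--Spaltenstein induction from sheets of $\g^{\mu_s}$ through $\mu_n$, with the tangent spaces at $\mu$ splitting as a semisimple direction plus the nilpotent contribution. It therefore suffices to establish the identity for nilpotent $\mu$ in each classical factor.

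Next I would compute $\dim\,\g^{\mu} - \dim\,[\g^{\mu}, \g^{\mu}]$ for a nilpotent $\mu$ in $\mathfrak{sp}$ or $\mathfrak{so}$ exactly as in the $\sL(n, \Complex)$ argument: use the generators and commutation relations for $\g^{\mu}$ given by Yakimova \cite{Yakimova} to read off the derived algebra, and express the difference as a combinatorial invariant of the partition of $\mu$, with appropriate corrections coming from the invariant bilinear form. Then I would match this quantity with $\dim(\sum_i \mathrm{T}_{\mu} S_i) - \dim O(\mu)$ by exhibiting, for each sheet $S_i$ through $\mu$, an explicit smooth family of elements inside $S_i$ starting at $\mu$ and preserving the centralizer dimension, in the spirit of the $\eps$-deformation used for $\sL(n, \Complex)$; the tangent vectors of these families span $\mathrm{T}_{\mu} S_i$.

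The hard part will be the multi-sheet situation. Unlike in $\sL(n, \Complex)$, a single $\mu \in \mathfrak{sp}$ or $\mathfrak{so}$ can belong to several sheets whose tangent spaces at $\mu$ overlap in nontrivial ways, so controlling $\dim(\sum_i \mathrm{T}_{\mu} S_i)$ requires a precise enumeration of sheets through $\mu$ (via the Kempken--Spaltenstein algorithm for Richardson orbits in classical types) together with an explicit description of each $\mathrm{T}_{\mu} S_i$. I expect a clean proof to proceed by constructing a canonical bijection between a basis of $\g^{\mu}/[\g^{\mu},\g^{\mu}]$ coming from Yakimova's generators and a system of independent deformation directions drawn from the various sheets; verifying this matching combinatorially for types $B$, $C$, $D$ is where I expect the real work to lie, and where an extension of Moreau's dimension formula (used in Section 3.4 for type $A$) to the other classical types will be indispensable.
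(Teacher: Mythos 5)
This statement is presented in the paper as a \emph{Conjecture}: the paper gives no proof, records only that it is known for rigid nilpotent orbits (Yakimova) and that the analogous assertion is \emph{false} for the exceptional algebra $\mathrm{G}_2$. What you have written is therefore not a proof but a programme, and the parts of it that are actually established are exactly the parts the paper already contains. Your opening reduction is correct: Proposition \ref{main} gives $[\g^{\mu},\g^{\mu}]\subset(\mathrm{T}_{\mu}S_i)^{\bot}$ for each sheet, intersecting gives the inclusion into $(\sum_i\mathrm{T}_{\mu}S_i)^{\bot}$, and since $\dim\g=\dim O(\mu)+\dim\g^{\mu}$ the conjecture is equivalent to the dimension identity $\dim\g^{\mu}-\dim[\g^{\mu},\g^{\mu}]=\dim(\sum_i\mathrm{T}_{\mu}S_i)-\dim O(\mu)$. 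Everything after that is announced rather than proved.

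Two concrete gaps. First, the reduction to nilpotent $\mu$ via Jordan decomposition rests on the assertion that the sheets of $\g$ through $\mu$ are induced from sheets of $\g^{\mu_s}$ through $\mu_n$ \emph{with the tangent spaces at $\mu$ splitting} into a semisimple direction plus a nilpotent contribution. The parametrization of sheets by induction data (Borho) is a statement about which decomposition classes a sheet contains, not about its tangent space at a point lying in a non-dense decomposition class; you would need to prove the splitting of $\mathrm{T}_{\mu}S_i$, and nothing in the paper or in your sketch does so. Second, and more fundamentally, the combinatorial identity for nilpotent elements of $\mathfrak{sp}$ and $\mathfrak{so}$ --- computing $\dim\g^{\mu}/[\g^{\mu},\g^{\mu}]$ from Yakimova's relations, enumerating the sheets through $\mu$, describing each $\mathrm{T}_{\mu}S_i$, and matching the two sides --- is precisely the open content of the conjecture, and you explicitly defer it (``where I expect the real work to lie''). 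Since the identity fails for $\mathrm{G}_2$, there can be no soft argument; the claim lives or dies on that type-by-type verification, which is absent. As written, the proposal establishes only the inclusion already contained in Proposition \ref{main}.
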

Recently, A.Premet and L.Topley \cite{Premet} have proved the first part of this conjecture for any algebraically closed ground field of characteristic $0$. They have also provided a combinatorial description of those elements $a \in \g$ for which there is only one sheet passing through $a$.\par
The second part of the conjecture remains open.\par

\par
Also note that the conjecture is false for the exceptional Lie algebra $\mathrm{G}_{2}$, as it follows from Remark 3 of \cite{Yakimova}.

\end{document}